\documentclass{article}

\usepackage[english]{babel}

\usepackage[letterpaper,top=2cm,bottom=2cm,left=3cm,right=3cm,marginparwidth=1.75cm]{geometry}

\usepackage{amsmath}
\usepackage{amssymb}
\usepackage{amsthm}
\usepackage{graphicx}
\usepackage[colorlinks=true, allcolors=blue]{hyperref}

\newtheorem{theorem}{Theorem}
\newtheorem{proposition}[theorem]{Proposition}
\newtheorem{lemma}[theorem]{Lemma}
\newtheorem{corollary}[theorem]{Corollary}
\newtheorem{question}[theorem]{Question}
\theoremstyle{definition}
\newtheorem{definition}[theorem]{Definition}
\theoremstyle{remark}
\newtheorem{observation}[theorem]{Observation}
\newtheorem{remark}[theorem]{Remark}

\newcommand{\tuple}[1]{\left\langle#1\right\rangle}
\newcommand{\concat}{{^\frown}}
\newcommand{\term}[1]{\textbf{#1}}

\title{Elementary Submodels, Coding Strategies, and an Infinite Real Number Game}
\author{Will Brian, Steven Clontz}

\begin{document}
\maketitle

\begin{abstract}
Matthew Baker investigated in \cite{10.2307/27643064}
an elegant infinite-length game that may be used to
study subsets of real numbers. We present two accessible
examples of how an important technique
from set theory or a different technique from 
infinite game theory may be used to
answer Baker's question on whether this game provides
a precise characterization for countable subsets of real
numbers, and connect this game to the well-studied
Banach-Mazur game from topology.
\end{abstract}

\section{Introduction}

Let \(\omega=\{0,1,2,\dots\}\) denote the first infinite ordinal.
The following game was named by Matthew Baker in \cite{10.2307/27643064},
based upon a game appearing in Mathematics Magazine problem \#1542 
\cite{doi:10.1080/0025570X.1998.11996621}.

\begin{definition}
Let \(W\) be a set of real numbers, called the \term{payoff set}.
During each round \(n<\omega\) of the \term{Cantor Game} \(CG(W)\),
Alice chooses a legal real number \(a_n\), followed by Bob choosing
a legal real number \(b_n\), where a number is legal provided it is
strictly greater than all previous choices of Alice and strictly less than
all previous choices of Bob.

After \(\omega\)-many rounds, Alice is said to have won the game provided
\(\displaystyle\lim_{n\to\infty}a_n\in W\); Bob wins otherwise.
\end{definition}

\begin{definition}
A strategy for a player in a game is said to be \term{winning} provided
any counter-strategy for the opponent is defeated by it. That is,
a strategy is not winning provided there exists a \term{successful}
counter-strategy for the opponent that defeats it.
\end{definition}

Note in particular that counter-strategies generally are defined
in terms of the strategy they counter. (If not, then the counter-strategy
is actually just a strategy!)

The Cantor Game is of particular interest as a very accessible introduction
to the area of infinite-length games. Such games are of importance in
both set theory and topology; see \cite{10.2307/44237047} for a classic
survey.
However, in contrast with the standard examples,
the Cantor Game requires no mathematical experience
beyond a freshman calculus
course to appreciate. To illustrate why such games are convenient tools
to characterize mathematical properties, in \cite{10.2307/27643064} Baker observed
the following.

\begin{definition}
A subset of a topological space is \term{perfect} if it is non-empty, closed,
and equals the set of its limit points.
\end{definition}

\begin{proposition}\label{bakerMainAlice}
If \(W\subseteq\mathbb R\) contains a perfect set, then Alice has a
winning strategy.
\end{proposition}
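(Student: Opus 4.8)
The plan is to fix a perfect set $P \subseteq W$ and design a strategy that forces $\lim_n a_n$ to lie in $P$. Since $P$ is closed, it suffices to exhibit the limit as a limit of points of $P$, so I will have Alice play inside $P$ throughout and arrange that her moves form a Cauchy sequence whose limit is pinned inside $P$. The engine of the strategy is a single extraction lemma: \emph{if $F \subseteq \mathbb{R}$ is nonempty, compact, and perfect, and $(c,d)$ is an open interval meeting $F$, then $F \cap (c,d)$ contains a nonempty compact perfect set of arbitrarily small diameter.} I expect this lemma to be the main obstacle, and I would prove it by a Cantor-scheme construction inside $(c,d)$: starting from a point $p \in F \cap (c,d)$ and using that $F$ has no isolated points, recursively split each surviving node into two distinct points of $F$ enclosed in disjoint closed subintervals of halving length, all kept inside a small neighborhood of $p$; the branch points of the resulting scheme form a nonempty perfect subset of $F$ lying in $(c,d)$ with diameter as small as desired. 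The delicate point is exactly the ``no isolated points'' hypothesis, which is what guarantees that every node can be split and hence that the scheme never terminates.

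With the lemma in hand, Alice maintains a nested sequence of nonempty compact perfect sets $P_0 \supseteq P_1 \supseteq \cdots$ with $\operatorname{diam} P_n \le 2^{-n}$, and plays $a_n = \min P_n$ each round (the minimum exists by compactness). First she applies the lemma to get a bounded $P_0 \subseteq P$ and plays $a_0 = \min P_0$. Inductively, suppose $P_n \subseteq (a_{n-1}, b_{n-1})$ has been chosen and Alice has played $a_n = \min P_n$; Bob answers with a legal $b_n$, so $a_n < b_n < b_{n-1}$. Because $a_n = \min P_n$ is not isolated in $P_n$, points of $P_n$ accumulate at $a_n$ necessarily from the right, whence $P_n \cap (a_n, b_n) \neq \emptyset$. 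Applying the extraction lemma to $F = P_n$ and the interval $(a_n, b_n)$ yields a nonempty compact perfect $P_{n+1} \subseteq P_n \cap (a_n, b_n)$ with $\operatorname{diam} P_{n+1} \le 2^{-(n+1)}$. Alice sets $a_{n+1} = \min P_{n+1}$; since $P_{n+1} \subseteq (a_n, b_n)$, this move is strictly larger than all her previous choices and strictly smaller than all of Bob's, hence legal.

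Finally I would verify that this strategy wins. The sets $P_n$ are nested, so $a_m = \min P_m \in P_n$ for all $m \ge n$; together with $\operatorname{diam} P_n \le 2^{-n}$ this shows $(a_n)$ is Cauchy and that its limit $x$ satisfies $x \in P_n$ for every $n$, using that each $P_n$ is closed. In particular $x \in P_0 \subseteq P \subseteq W$, so $\lim_n a_n = x \in W$ and Alice has won. The only genuine content is the extraction lemma; everything else is bookkeeping to guarantee Alice always has a legal move inside whatever window Bob leaves her, which is precisely what playing the minimum of a shrinking perfect set secures, no matter how aggressively Bob shrinks the interval from the right.
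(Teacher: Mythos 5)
Your argument is correct in substance, but note first that the paper does not actually contain a proof of Proposition~\ref{bakerMainAlice}: it is quoted from Baker's article, so there is no in-text argument to compare against, and your proof stands or falls on its own. Judged on its own, it works, with one small repair needed: your extraction lemma is stated for \emph{compact} perfect $F$, yet its very first application is to the perfect set $P$ itself, which is closed but need not be bounded (take $P=\mathbb R$). This is harmless, because your Cantor-scheme proof of the lemma never uses boundedness of $F$ --- only that $F$ is closed and has no isolated points --- so you should simply state the lemma for closed $F$. (In fact the lemma admits a proof with no scheme at all: pick $p\in F\cap(c,d)$ and $\varepsilon>0$ with $[p-\varepsilon,p+\varepsilon]\subseteq(c,d)$ and $2\varepsilon$ below the target diameter; then $F\cap(p-\varepsilon,p+\varepsilon)$ is nonempty and dense in itself, so its closure is a nonempty compact perfect subset of $F\cap(c,d)$ of small diameter.) The rest of your induction is sound: playing $a_n=\min P_n$ is legal because $P_{n+1}\subseteq(a_n,b_n)$, the minimum of a perfect set is a limit of the set from the right, so Bob can never shut Alice out, and the limit lies in every closed $P_n$, hence in $P\subseteq W$. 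For comparison, the usual argument in the spirit of Baker's carries less machinery: Alice restricts her moves to points of $P$ that are limits of $P$ from the right; whenever $a$ is such a point and $b>a$, the interval $(a,b)$ contains another such point (any set has only countably many points isolated from the right, while $P\cap(a,b)$, being nonempty, relatively closed, and dense in itself, is uncountable), so the invariant can be maintained forever and $\lim_{n\to\infty} a_n\in P$ because all $a_n\in P$ and $P$ is closed. That route avoids tracking a nested sequence of sets; your version costs the extraction lemma but buys extra control, since the limit is trapped in every $P_n$ and Alice can therefore steer it into any prescribed small portion of $P$.
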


\begin{proposition}\label{bakerMainBob}
If \(W\subseteq\mathbb R\) is countable, then Bob has a winning strategy
for \(CG(W)\).
\end{proposition}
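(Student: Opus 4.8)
The plan is to exploit countability through a diagonalization: enumerate \(W=\{w_n:n<\omega\}\) and have Bob devote round \(n\) to guaranteeing that \(w_n\) cannot be the limit \(\lim_n a_n\). First I would record that the payoff is well-defined. Each of Alice's legal moves satisfies \(a_n>a_{n-1}\), so \((a_n)\) is strictly increasing, and legality also forces \(a_m<b_0\) for every \(m\); hence \((a_n)\) is bounded above and \(\alpha:=\lim_n a_n=\sup_m a_m\) exists by monotone convergence. The key structural observation I would prove next is that \(\alpha\) is trapped strictly inside every interval \((a_n,b_n)\). Indeed, since \((b_n)\) is strictly decreasing and, for each \(m\), either \(b_n\) is one of the Bob-choices preceding \(a_m\) or else \(a_m\le a_n<b_n\), we get \(a_m<b_n\) for all \(m\); therefore \(\alpha\le b_{n+1}<b_n\), while \(a_n<a_{n+1}\le\alpha\). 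Thus \(a_n<\alpha<b_n\) for every \(n\).

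With this in hand I would define Bob's strategy round by round. At round \(n\), having seen \(a_n\), Bob compares \(w_n\) to \(a_n\). If \(w_n\le a_n\), then \(w_n\le a_n<\alpha\) already forces \(w_n\ne\alpha\), so Bob may respond with any legal number. If instead \(w_n>a_n\), Bob plays some legal \(b_n\) with \(b_n<w_n\); such a move exists because \(a_n\) lies strictly below both \(w_n\) and the previous bound \(b_{n-1}\) (using the convention \(b_{-1}=+\infty\) for round \(0\)), so any point strictly between \(a_n\) and \(\min(w_n,b_{n-1})\) is legal and lies below \(w_n\). In this case the trapping observation gives \(\alpha\le b_n<w_n\), so again \(w_n\ne\alpha\).

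Combining the two cases, after every round \(n\) we have \(w_n\ne\alpha\), whence \(\alpha\notin W\) and Bob wins \(CG(W)\). The main thing to check carefully is the legality bookkeeping, namely that Bob's prescribed \(b_n\) is simultaneously greater than \(a_n\) and less than all earlier \(b_m\); this, together with the \emph{strict} inequality \(\alpha<b_n\) that eliminates the borderline possibility \(w_n=\alpha\), is exactly what the trapping observation supplies. Everything else is routine, and I would note that no appeal to the more sophisticated machinery discussed later (elementary submodels or coding strategies) is needed for this direction.
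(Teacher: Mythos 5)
Your proof is correct and uses essentially the same idea as the paper: enumerate \(W=\{w_n:n<\omega\}\) and have Bob diagonalize, devoting round \(n\) to eliminating \(w_n\). The paper's version (stated for the equivalent game \(BG(W)\), and as a coding strategy) simply has Bob play \(w_n\) itself whenever it is legal, which kills \(w_n\) outright; your variant plays strictly below \(w_n\) and compensates with the trapping observation \(a_n<\alpha<b_n\), which is exactly the extra bookkeeping needed to handle \(CG\)'s limit-based winning condition directly, and it is carried out correctly.
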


In particular, \(\mathbb R\) (which is perfect) must be uncountable!
Baker left open the converses of these implications; in 2017 his student
LaDue demonstrated the following.

\begin{theorem}[\cite{https://doi.org/10.48550/arxiv.1701.09087}]
Alice has a winning strategy in \(CG(W)\) if and only if
\(W\) contains a perfect set.
\end{theorem}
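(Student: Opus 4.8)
The plan is to prove only the nontrivial implication, since the converse---that a perfect subset of \(W\) yields a winning strategy for Alice---is exactly Proposition~\ref{bakerMainAlice}. So I would assume \(\sigma\) is a winning strategy for Alice and try to carve a perfect subset of \(W\) out of the plays consistent with \(\sigma\), by building a Cantor scheme indexed by \(2^{<\omega}\).

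First I would set up notation. Track the game through \emph{positions}: finite partial plays \(p\) consistent with \(\sigma\) in which both players have just completed some round, recording the last Alice value \(\alpha_p\) and last Bob value \(\beta_p\), with \(\alpha_p<\beta_p\). Associate to \(p\) the closed interval \(J_p=[\alpha_p,\beta_p]\); the monotonicity of the game guarantees that any infinite play extending \(p\) has its Alice-limit inside \(J_p\). Two elementary moves for Bob drive everything. \textbf{Shrinking:} from any position \(p\), once Alice plays her forced move \(a=\sigma(p)\), Bob may answer with \(a+\varepsilon\), confining every later move to \((a,a+\varepsilon)\); so Bob can extend \(p\) to a position with interval of length \(<\varepsilon\). \textbf{Separation (the key step):} because Alice moves first, her next move \(a_1=\sigma(p)\) is fixed; choosing \(b_1=\tfrac{a_1+\beta_p}{2}\) and letting \(a_2=\sigma(p\concat\tuple{a_1,b_1})\), the rules force \(a_1<a_2<b_1\). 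Then Bob answering \(p\) with some \(b_1^0\in(a_1,a_2)\) yields a position with interval \((a_1,b_1^0)\) lying below \(a_2\), whereas clamping the play \(p\concat\tuple{a_1,b_1}\) just above \(a_2\) yields a position with interval inside \((a_2,a_2+\varepsilon)\); these two extensions of \(p\) thus have disjoint closed intervals, both contained in the open interval \((\alpha_p,\beta_p)\).

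With these in hand I would construct positions \(p_s\) for \(s\in2^{<\omega}\) so that \(p_t\) extends \(p_s\) whenever \(s\subseteq t\), with \(J_{s\concat i}\subseteq(\alpha_s,\beta_s)\), \(J_{s\concat 0}\cap J_{s\concat 1}=\emptyset\), and \(\operatorname{diam}J_s<2^{-|s|}\): at each node apply Separation to split, then Shrinking to each child to control its diameter. For a branch \(x\in2^\omega\) the nested positions \(p_{x\restriction n}\) union to a single infinite play consistent with \(\sigma\), whose Alice-limit \(f(x)\) is the unique point of \(\bigcap_n J_{x\restriction n}\); since \(\sigma\) is winning, \(f(x)\in W\). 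I would then check that \(f\colon2^\omega\to\mathbb R\) is continuous (branches agreeing to depth \(n\) land in a common interval of diameter \(<2^{-n}\)) and injective (distinct branches are sent into the disjoint intervals at their splitting level). As a continuous injection on the compact space \(2^\omega\), \(f\) is a homeomorphism onto its image, so \(f[2^\omega]\) is a nonempty compact subset of \(\mathbb R\) with no isolated points---that is, a perfect set---and it lies inside \(W\).

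The hard part will be the Separation step, and it is worth isolating as a lemma. The naive hope is that Bob can split the interval in a single round, but because Alice plays first her move is already determined by \(\sigma\) and independent of Bob's current choice, so a single round only lets Bob vary the \emph{upper} endpoint of the interval. The fix is to look one round ahead and exploit the strict inequality \(a_1<a_2\) between two consecutive forced Alice moves, which is exactly what manufactures two genuinely disjoint reachable intervals; I expect verifying the legality and disjointness bookkeeping here to be the only place requiring real care.
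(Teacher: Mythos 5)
Your proposal is correct, but note that the paper never proves this statement at all: it is quoted as LaDue's theorem with a citation, and the paper's own contribution is the Bob-side characterization (Theorems \ref{willProof} and \ref{stevenProof}). So there is no in-paper argument to compare against; what you have written is a self-contained proof of the nontrivial direction, and it is the standard one (essentially the argument in LaDue's paper): from a winning strategy \(\sigma\) for Alice, build a Cantor scheme of \(\sigma\)-consistent positions indexed by \(2^{<\omega}\), with disjoint nested closed intervals and shrinking diameters, and push the resulting continuous injection \(2^\omega\to\mathbb{R}\) into \(W\). Your identification of the Separation step as the crux is exactly right, and your resolution is sound: since Alice moves first, a single round cannot split the interval, but looking one round ahead at the two forced Alice moves \(a_1<a_2\) lets Bob produce two \(\sigma\)-consistent extensions of different lengths whose intervals \([a_1,b_1^0]\) and \([a_2,b_2]\) are genuinely disjoint. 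Two routine details are glossed over but are easily repaired: the root of the induction has no previous moves, so Bob's ``midpoint'' reply \((a_1+\beta_p)/2\) must be replaced by, say, \(a_1+1\) until the first round is completed; and in both the Shrinking and clamping steps Bob's reply must also stay below his own previous move, so \(a+\varepsilon\) should be \(\min(a+\varepsilon,\beta)\) or similar. Neither affects the architecture, and the deferral of the converse direction to Proposition~\ref{bakerMainAlice} is legitimate, as that is precisely what the proposition asserts.
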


LaDue then observes that under the Axiom of Determinacy, which implies that every uncountable subset of $\mathbb R$ contains a perfect set,
Bob has a winning strategy in \(CG(W)\) if and only if
\(W\) is countable.
This paper shows this axiom is unnecessary: even without assuming the Axiom of Determinacy, Bob has a winning strategy in \(CG(W)\) if and only if
\(W\) is countable.
In doing so, we
demonstrate two mathematical concepts not typically accessible
to a general mathematical audience: elementary submodels and
limited-information strategies.

But before we do, let's we define a game equivalent to \(CG(W)\) that's
more convenient to study.

\begin{definition}
Let \(W\) be a payoff set of real numbers.
Then the \term{Baker Game} \(BG(W)\) proceeds identically to
the Cantor Game, except Alice wins whenever 
there remains a legal \(w\in W\) after \(\omega\)-many rounds.
\end{definition}

In other words, \(CG(W)\) and \(BG(W)\) are the same except when it comes to determining the winner at the end of the game. In \(CG(W)\), Alice wins if and only if $\displaystyle \lim_{n \rightarrow \infty}a_n$ is a point of $W$, whereas in \(BG(W)\) Alice wins if and only if there is a point of $W$ betweeen $\displaystyle \lim_{n \rightarrow \infty}a_n$ and $\displaystyle \lim_{n \rightarrow \infty}b_n$ (inclusive).

We say that a strategy (resp. counter-strategy)
modification is an \term{improvement}
if the modification preserves winning strategies
(resp. successful counter-strategies).
A rigorous verification of the following results is beyond
the scope of this paper, but they all follow essentially
from the observations that (1) \(\displaystyle\lim_{n\to\infty}a_n\) is
legal at the end of the game, (2) by choosing smaller legal numbers,
Bob reduces Alice's possible moves without providing her
additional ways to win, and thus (3) by playing numbers within \(\frac{1}{n}\) of Alice's last move each
round, Bob can ensure without loss that 
\(\displaystyle \lim_{n\to\infty}a_n = \lim_{n\to\infty}b_n\) 
at the end of the game.

\begin{lemma}\label{bobSmaller}
Any (counter-)strategy for Bob in the Cantor Game (resp. Baker Game) may be
improved by replacing each choice by a smaller legal number.
\end{lemma}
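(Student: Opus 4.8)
The plan is to establish both halves of the ``improvement'' claim by proving their contrapositives: shrinking each of Bob's choices can never turn a winning strategy into a non‑winning one, nor a successful counter‑strategy into an unsuccessful one. The organizing principle is that lowering Bob's numbers only tightens the constraint governing Alice's legal moves while simultaneously shrinking the terminal interval \([\lim_{n\to\infty}a_n,\lim_{n\to\infty}b_n]\) on which Alice's victory depends. The point is that Alice's legality is an \emph{upper} bound imposed entirely by Bob's choices: if a number is a legal move for Alice against the smaller play, it is automatically a legal move against the larger one, since it clears the more restrictive ceiling.

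First I would fix Bob's original object \(\tau\) (strategy or counter‑strategy) together with its shrunken version \(\tau'\), where \(\tau'(h)<\tau(h)\) on every legal history \(h\), and set up a coupling between a play in which Bob uses \(\tau'\) and a play in which Bob uses \(\tau\). In the strategy case, a hypothetical successful counter‑strategy \(\sigma'\) for Alice against \(\tau'\) determines a fixed sequence of Alice‑moves \(a_0,a_1,\dots\), which Alice simply \emph{replays} against the real Bob governed by \(\tau\); thus \(\lim_{n\to\infty}a_n\) is identical on the two sides. The step to verify is that these replayed moves stay legal against \(\tau\): bounded above only by Bob's numbers, and with the \(\tau'\)-play supplying the smaller ceilings, legality against \(\tau'\) should yield legality against \(\tau\).

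Once legality is secured, the winning condition transfers in exactly the direction required. For the Cantor Game, Alice wins precisely when \(\lim_{n\to\infty}a_n\in W\), and this limit is unchanged by the replay, so a victory of Alice against \(\tau'\) would produce one against \(\tau\); hence if \(\tau\) is winning for Bob, so is \(\tau'\). For the Baker Game, Alice wins precisely when some \(w\in W\) survives in \([\lim_{n\to\infty}a_n,\lim_{n\to\infty}b_n]\); since \(\tau'\) yields the smaller right endpoint, the surviving interval against \(\tau'\) is contained in that against \(\tau\), and again a surviving \(w\in W\) against \(\tau'\) would survive against \(\tau\). The counter‑strategy case is handled by the same bookkeeping, with the roles of ``strategy'' and ``response to a fixed strategy'' interchanged, comparing the two plays that Alice's fixed strategy induces against \(\tau\) and against \(\tau'\).

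The hard part — and the reason a fully rigorous verification is deferred here — is that the two coupled plays do not remain aligned round by round: once Bob plays a strictly smaller number the two histories diverge, and because neither \(\tau\) nor Alice's responses are assumed monotone in the opponent's past moves, one cannot simply assert that Bob's \(n\)-th move under \(\tau'\) lies below his \(n\)-th move under \(\tau\), which is exactly the domination needed to transfer legality. I expect this to be the principal obstacle, to be overcome by carrying the domination of the \(\tau'\)-play's ceilings as an invariant through an induction on the round, or more cheaply by taking Bob's shrunken choices within \(\frac{1}{n}\) of \(a_n\), forcing \(\lim_{n\to\infty}b_n=\lim_{n\to\infty}a_n\) so that the terminal intervals collapse and all the limit comparisons above become immediate.
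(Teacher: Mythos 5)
First, a point of comparison: the paper never actually proves this lemma. It explicitly declares a rigorous verification beyond its scope and offers only the three observations preceding the statement, so your proposal is an attempt at what the authors deliberately left informal. Your treatment of the \emph{strategy} half is correct and is essentially a rigorization of the paper's observation (2). Moreover, the ``hard part'' you flag there dissolves once you adopt the paper's own convention (Remark~\ref{willPartial}) that a strategy for Bob is a function of Alice's moves alone: in your replay, Alice's moves are by construction the same sequence in both plays, so $b'_n=\tau'(\tuple{a_0,\dots,a_n})<\tau(\tuple{a_0,\dots,a_n})=b_n$ is immediate from the pointwise shrink hypothesis; legality of the replayed moves follows from $a_{n+1}<b'_j<b_j$, and the Cantor and Baker winning conditions transfer exactly as you describe. (Your fallback, Bob playing within $\frac{1}{n}$ of $a_n$, is the paper's observation (3); it is not needed for this half and does not by itself produce the domination you were worried about.)

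The genuine gap is the \emph{counter-strategy} half, which you dismiss as ``the same bookkeeping with the roles interchanged.'' It is not, and in fact the statement your bookkeeping would prove there --- that \emph{every} pointwise-smaller legal replacement of a successful counter-strategy is successful --- is false. With a fixed strategy $\sigma_A$ for Alice, shrinking Bob's moves changes Alice's responses, so the two plays diverge in Alice's moves as well as Bob's; there is no coupling, and $\lim_{n\to\infty}a_n$ is not preserved. Concretely, take $W=\{0\}$ and let $\sigma_A$ play $a_0=-1$ and then: if Bob's first move is exactly $\frac{1}{2}$, climb toward $0$ (each move halfway from her last move toward $\min\{0,\text{Bob's last move}\}$), so that she wins precisely when all of Bob's moves stay $\geq 0$; if Bob's first move is anything else, steer her limit away from $0$ (always possible, since only one point must be avoided). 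Let $\tau$ open with $b_0=1$ and thereafter halve the distance from its previous value toward $\max\{\text{Alice's last move},0\}$; all of $\tau$'s values are positive, and $\tau$ defeats $\sigma_A$. But the pointwise-smaller legal replacement $\tau'$ that opens with $\frac{1}{2}$ and thereafter plays positive values below $\tau$'s (possible since $\tau$'s values are positive) \emph{loses} to $\sigma_A$: both players' moves converge to $0\in W$, in the Cantor and Baker games alike. So shrinking can turn a successful counter-strategy into a failing one, and any correct treatment of this half must read ``may be improved'' existentially --- the smaller numbers must be chosen adaptively, using knowledge of the strategy being countered --- rather than as the universally quantified claim your symmetric transfer argument aims at. This is precisely the delicacy the paper waves away, and your proposal as written does not survive it.
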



\begin{theorem}
Each player has a winning strategy in the Baker Game if and only if
they have a winning strategy in the Cantor Game.
\end{theorem}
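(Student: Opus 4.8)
The plan is to exploit that \(CG(W)\) and \(BG(W)\) have identical move structure and legality rules, so a (counter-)strategy for a player in one game is literally a (counter-)strategy in the other; only the final adjudication differs. Write \(\alpha=\lim_{n\to\infty}a_n\) and \(\beta=\lim_{n\to\infty}b_n\); these limits exist because \((a_n)\) increases and \((b_n)\) decreases, and legality forces \(a_n<b_m\) for all \(n,m\), hence \(\alpha\le\beta\). In these terms the two rules read: Alice wins \(CG(W)\) iff \(\alpha\in W\), and Alice wins \(BG(W)\) iff \(W\cap[\alpha,\beta]\ne\emptyset\). The single most useful observation is that \(\alpha\in[\alpha,\beta]\), so on any one play an Alice win in \(CG(W)\) is automatically an Alice win in \(BG(W)\); contrapositively, a Bob win in \(BG(W)\) is automatically a Bob win in \(CG(W)\).

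First I would dispatch the two ``easy'' directions, which need no limited-information machinery. If \(\sigma\) is a winning strategy for Alice in \(CG(W)\), then against any counter-play the resulting \(\alpha\) satisfies \(\alpha\in W\cap[\alpha,\beta]\), so \(\sigma\) wins \(BG(W)\); thus Alice's \(CG\)-win transfers to \(BG\). Symmetrically, if \(\tau\) is a winning strategy for Bob in \(BG(W)\), then every resulting play has \(W\cap[\alpha,\beta]=\emptyset\), hence \(\alpha\notin W\), so \(\tau\) wins \(CG(W)\); thus Bob's \(BG\)-win transfers to \(CG\). In both cases the very same strategy works verbatim.

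The remaining two directions carry the content, and both run through the fact (Lemma \ref{bobSmaller} together with observation (3) of the preceding discussion) that Bob may improve any of his (counter-)strategies, in either game, to one that additionally forces \(\beta=\alpha\)---for instance by always answering within \(\frac{1}{n}\) of Alice's last move---while preserving winning strategies and successful counter-strategies. Once \(\beta=\alpha\) the interval \([\alpha,\beta]\) collapses to \(\{\alpha\}\) and the two payoff rules coincide, which lets the implications cross. For Bob's direction \(CG\Rightarrow BG\): given a winning Bob strategy in \(CG(W)\), improve it to \(\tau'\) forcing \(\alpha=\beta\); it still wins \(CG(W)\), so \(\alpha\notin W\) on every play, whence \(W\cap[\alpha,\beta]=W\cap\{\alpha\}=\emptyset\) and \(\tau'\) wins \(BG(W)\). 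For Alice's direction \(BG\Rightarrow CG\) I would argue by contraposition: if a \(BG(W)\)-winning Alice strategy \(\sigma\) failed to win \(CG(W)\), there would be a successful Bob counter-strategy in \(CG(W)\); improving it to force \(\alpha=\beta\) keeps it successful (so \(\alpha\notin W\)) and now also yields \(W\cap[\alpha,\beta]=\emptyset\), contradicting that \(\sigma\) wins \(BG(W)\).

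The main obstacle is entirely concentrated in the legitimacy of this ``force \(\alpha=\beta\)'' improvement: one must know that shrinking Bob's moves to within \(\frac{1}{n}\) of Alice's never converts a Bob win into a loss and never violates legality, and that this holds simultaneously for strategies and for counter-strategies in both games. Since Lemma \ref{bobSmaller} and the surrounding observations are precisely this guarantee and may be assumed, what remains is the bookkeeping above; combining the easy \(CG\Rightarrow BG\) with the hard \(BG\Rightarrow CG\) gives the biconditional for Alice, and combining the hard \(CG\Rightarrow BG\) with the easy \(BG\Rightarrow CG\) gives it for Bob.
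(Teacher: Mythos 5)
Your proof is correct and follows essentially the route the paper intends: the paper itself offers no detailed proof of this theorem, asserting only that it follows from Lemma \ref{bobSmaller} and the observation that Bob can force \(\lim_{n\to\infty}a_n=\lim_{n\to\infty}b_n\) by playing within \(\frac{1}{n}\) of Alice's last move. Your write-up simply fills in the four-direction bookkeeping (the two trivial transfers via \(\alpha\in[\alpha,\beta]\), and the two substantive ones via the \(\alpha=\beta\) improvement applied to Bob's strategies and counter-strategies) that the paper waves away as beyond its scope.
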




\begin{observation}\label{bobStrat}
By Lemma \ref{bobSmaller} we may assume without loss of
generality that Bob only plays rational numbers in either game.
\end{observation}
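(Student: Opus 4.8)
The plan is to obtain this as an immediate consequence of Lemma~\ref{bobSmaller}. Since that lemma lets us shrink each of Bob's moves to \emph{any} smaller legal number while preserving a winning strategy (resp.\ successful counter-strategy), it will suffice to observe that Bob can always find a smaller legal move that happens to be rational.

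To make this precise, I would first recall the shape of Bob's legal moves. Suppose it is Bob's turn in round $n$, so Alice has just played $a_n$ and, if $n>0$, Bob's most recent move was $b_{n-1}$. A number is legal for Bob exactly when it is strictly greater than every previous choice of Alice and strictly less than every previous choice of Bob. Because Alice's choices strictly increase and Bob's strictly decrease, this says precisely that a legal move lies in the nonempty open interval $(a_n, b_{n-1})$, or in the ray $(a_n,\infty)$ when $n=0$. In particular, whatever move $b$ the given (counter-)strategy prescribes satisfies $a_n < b$.

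Next I would invoke density: since $\mathbb{Q}$ is dense in $\mathbb{R}$, there is a rational $q$ with $a_n < q < b$. Such a $q$ is again legal, as $q > a_n$ guarantees it exceeds all of Alice's previous moves and $q < b$ together with the legality of $b$ guarantees it falls below all of Bob's previous moves; moreover $q < b$, so $q$ is a smaller legal number in the sense of Lemma~\ref{bobSmaller}. Carrying out this replacement at every round then yields a strategy (resp.\ counter-strategy) for Bob, in either game, all of whose outputs are rational and which is no weaker than the original.

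The content here is carried entirely by Lemma~\ref{bobSmaller}, so I do not expect a genuine obstacle; the only point deserving care is that the rounding must be fixed at every node of the game tree rather than along a single line of play, since replacing Bob's earlier moves by rationals alters the histories on which the strategy is later consulted. Because the lemma certifies that shrinking to smaller legal values is an improvement regardless of \emph{which} legal values are chosen, any uniform rule works — for instance, take $q$ to be the first rational in a fixed enumeration of $\mathbb{Q}$ lying strictly between $a_n$ and the originally prescribed move — and this produces the desired rational-valued strategy of the same strength.
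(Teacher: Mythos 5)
Your route --- density of $\mathbb{Q}$ together with Lemma~\ref{bobSmaller} --- is the intended one; the paper offers no separate argument for this observation precisely because it is meant to follow in exactly this way. However, your execution breaks down at the very point you singled out as the only one deserving care. Once Bob's \emph{earlier} choices have been replaced by rationals, the move $b$ that the original (counter-)strategy $\sigma$ prescribes at the current history is no longer guaranteed to be legal in the modified play: $\sigma$ only promises that $b$ lies below its \emph{own} earlier prescriptions, not below the strictly smaller rationals Bob has actually played. So the step ``$q<b$ together with the legality of $b$ guarantees it falls below all of Bob's previous moves'' rests on a false premise, and the uniform rule in your last paragraph can output illegal moves. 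Concretely, take $a_0=0$ and $\sigma(\tuple{a_0})=10$, and suppose the first rational of your fixed enumeration lying in $(0,10)$ is $1$, so Bob actually plays $1$. Alice answers $a_1=\tfrac12$. Since $\sigma$ is only constrained to be legal in its own play, we may have $\sigma(\tuple{a_0,a_1})=9\in(\tfrac12,10)$; the first rational of the enumeration lying in $(\tfrac12,9)$ may then be $2$, or $1$ itself, neither of which Bob may legally play, his actual previous move being $1$.

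The repair is small, but it makes the definition genuinely recursive rather than a pointwise rounding. In round $n$, writing $b_{n-1}'$ for Bob's actual (already rationalized) previous move and $b=\sigma(t)$ for the original prescription given Alice's moves $t=\tuple{a_0,\dots,a_n}$, choose $q$ to be a rational in $\bigl(a_n,\min(b,b_{n-1}')\bigr)$ rather than in $(a_n,b)$. This interval is nonempty: $a_n<b_{n-1}'$ because Alice's move was legal in the modified play, and $a_n<b$ because, by induction, Bob's rationalized moves stay below $\sigma$'s prescriptions, so $t$ is also a legal history against $\sigma$ and $\sigma$ must answer it legally. Any such $q$ is then both smaller than $\sigma$'s prescription and legal among Bob's actual previous moves, so Lemma~\ref{bobSmaller} applies exactly as you intended. (Note that one cannot dodge the issue by instead consulting $\sigma$ on the modified history that records Bob's rational moves: such histories never arise when Bob follows $\sigma$, so $\sigma$'s values there are unconstrained and its winning property says nothing about them.)
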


We will also make use of this fact about the real numbers.

\begin{definition}
A subset of the reals if \term{well-founded} if it contains
no infinite decreasing sequence.
\end{definition}

\begin{theorem}
Every well-founded subset of the reals is countable.
\end{theorem}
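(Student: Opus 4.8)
The plan is to exploit the density of \(\mathbb{Q}\) in \(\mathbb{R}\) to inject (most of) a well-founded set into the rationals. First I would observe that, because \(S \subseteq \mathbb{R}\) is linearly ordered and contains no infinite strictly decreasing sequence, every nonempty subset \(A \subseteq S\) must have a least element: otherwise, starting from any \(a_0 \in A\) and repeatedly choosing some \(a_{n+1} \in A\) with \(a_{n+1} < a_n\) (possible since \(a_n\) is not least), one would build an infinite decreasing sequence in \(S\), contradicting well-foundedness. In other words, \(S\) is well-ordered by the usual order on \(\mathbb{R}\).

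Next, for each \(s \in S\) that is not the greatest element of \(S\), the set \(\{t \in S : t > s\}\) is nonempty, so by the previous paragraph it has a least element, which I will call \(s^+\). By construction \(s < s^+\) and no point of \(S\) lies strictly between them, so the open interval \((s, s^+)\) is a nonempty subset of \(\mathbb{R}\) that is disjoint from \(S\).

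The key step is to check that these intervals are pairwise disjoint. Given \(s < s'\) in \(S\), both non-maximal, the fact that \(s' \in \{t \in S : t > s\}\) forces \(s^+ \le s'\); hence every point of \((s, s^+)\) is less than \(s'\), while every point of \((s', (s')^+)\) exceeds \(s'\), so the two intervals cannot meet. Choosing a rational \(q_s \in (s, s^+)\) for each non-maximal \(s\) therefore defines an injection from \(S\) minus its greatest element (should one exist) into \(\mathbb{Q}\), and since at most one element is omitted this shows that \(S\) is countable.

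I expect the only genuine subtlety to be the first step --- extracting a least element from mere well-foundedness --- since it silently invokes dependent choice to construct the would-be descending sequence; everything afterward is a routine ``disjoint gaps each swallow a distinct rational'' argument.
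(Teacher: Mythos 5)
Your proof is correct. Note that the paper itself states this theorem without proof, treating it as a known background fact, so there is nothing to compare against; your argument is the standard one and fills that gap. Two small remarks. First, your closing observation is exactly right: the step from ``no infinite decreasing sequence'' to ``every nonempty subset has a least element'' uses dependent choice, and some fragment of choice is genuinely unavoidable here --- in \(\mathsf{ZF}\) alone there can be an infinite Dedekind-finite set of reals, which contains no infinite sequence of distinct elements at all (so it is vacuously well-founded) yet is not countable; since the paper works in \(\mathsf{ZFC}\) this is harmless. Second, when you ``choose a rational \(q_s \in (s, s^+)\)'' for each non-maximal \(s\), you are selecting from a possibly uncountable family of sets, but no further appeal to choice is needed: fix an enumeration of \(\mathbb{Q}\) once and for all and let \(q_s\) be the rational of least index in the gap. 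One could also streamline the argument slightly by skipping the well-ordering and successor apparatus: with dependent choice, no point of \(S\) is a limit from the right of points of \(S\), so each \(s \in S\) admits a rational \(q_s > s\) with \((s, q_s) \cap S = \emptyset\), and \(s \mapsto q_s\) is already injective (if \(s < s'\) shared the same rational \(q\), then \(s'\) would lie in \((s,q) \cap S\)). But this is a matter of taste; your version is complete as written.
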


\section{Elementary Submodels}

To illustrate the use of elementary submodels, it's helpful first
to demonstrate a na\"{i}ve (and ultimately incorrect) proof of our main theorem.

\begin{remark}\label{willPartial}
Let \(\sigma\) be an arbitrary strategy for Bob,
where \(\sigma(t)\) is Bob's next choice given the sequence
of prior choices \(t=\tuple{a_0,\dots,a_{n-1}}\) by Alice.
By Observation \ref{bobStrat}, we may (and do) assume \(\sigma(t)\) is a rational number for any sequence $t$ of plays by Alice.

For each sequence of choices \(t=\tuple{a_0,\dots,a_{n-1}}\) by Alice, let us say $x$ is legal after $t$ if it is legal for Alice to play $x$ next, i.e., if $a_{n-1} < x < \sigma(t)$.
Define
\[E_t=\left\{x\text{ legal after }t:\forall a
\big(a\in(a_{n-1},x)\Rightarrow x\geq \sigma(t\concat\tuple{a})\big)\right\}.\]
In other words, $E_t$ contains those $x$ that are legal after $t$, and  certain to be eliminated in round $n$ of the game, because no matter what number $a$ Alice chooses to play, $x$ will either be $\leq$ Alice's play $a$ or $\geq$ Bob's play $\sigma(t\concat\tuple{a})$.

We claim that this set contains no infinite decreasing sequence
bounded above the last move \(a_{n-1}\) of Alice.
To see
this, let \(S\) be such a sequence and let \(a=\inf S\); then
\(a<s<\sigma(t\concat\tuple a)\)
for some \(s\in S\), showing \(s\not\in E_{t}\).
It follows that \(E_{t}\cap[a_{n-1}+\varepsilon,\infty)\) is well-founded and
therefore countable for each \(\varepsilon>0\),
and thus \(E_{t}\) is countable.

If $x \notin E_t$ for every $t$, then Alice can play to ensure that $x$ is between $\displaystyle \lim_{n \rightarrow \infty}a_n$ and $\displaystyle \lim_{n \rightarrow \infty}b_n$ at the end of the game.
Indeed, given a sequence \(t=\tuple{a_0,\dots,a_{n-1}}\) of Alice's moves
keeping \(x\) legal against \(\sigma\), Alice's next move
can be any \(a_n\in(a_{n-1},x)\) with \(x<\sigma(t\concat\tuple{a_n})\) (which exists because $x \notin E_t$).
Then \(x\) remains legal as \(a_n<x<\sigma(t\concat\tuple{a_n})\).
\end{remark}

So far, so good.
The na\"{i}ve idea to complete this proof is to require that Alice only plays rational numbers at any point in the game.
There are only countably-many finite sequences of rationals, so this means there are only countably many sets $E_t$ to consider. As
\(E_t\) is countable for every $t$, this means that for any uncountable payoff set
\(W\), there is some \(x\in W\) such that $x \notin E_t$ for every sequence
of rationals \(t\). As we just demonstrated, this implies that Alice can play the game to ensure that $x$ is between $\displaystyle \lim_{n \rightarrow \infty}a_n$ and $\displaystyle \lim_{n \rightarrow \infty}b_n$ at the end of the game.

The requirement that Alice only play rational numbers may seem innocuous: after all, any strategy of Alice can be improved by playing a slightly smaller rational each round, so if Alice has a winning strategy, then she has one using only rational numbers (just like Bob).

However, we are not dealing with a strategy of Alice's, but a strategy of Bob's. It is nonsense, in this context, to ask that Alice play with an improved strategy using only rationals, because we are considering whether Alice has a counter-strategy that defeats \(\sigma\).
The real issue here is the part of Remark \ref{willPartial} proving $E_t$ is countable. This proof
requires Alice to choose a particular number \(a_n\in(a_{n-1},x)\)
where \(x<\sigma(t\concat\tuple{a_n})\) witnessing that \(x\not\in E_t\):
and there's no guarantee that this ``witness'' is a rational number! 
Thus it seems that our proof that $E_t$ is countable is incompatible with requiring Alice to play only rational numbers. 

Of course, the rational numbers are not the only countable set in town. Perhaps, to get around this obstacle, we can expand the set of Alice's possible moves to include all the ``witnesses'' mentioned in the previous paragraph. If the expanded set of possible plays remains countable, we will still have only countably many sets $E_t$, and our na\"{i}ve proof idea can be made to work.


Finding a large enough -- but still countable -- set of possible plays for Alice is precisely where elementary submodels come into the picture. Their part in the proof begins with Theorem~\ref{LSthm} below, a consequence of
the downward L\"owenheim–Skolem theorem.

Given a large, uncountable set $H$, we will consider the structure $(H,\in)$ as a possible model for some or all of the $\mathsf{ZFC}$ axioms of set theory. A set $M \subseteq H$ is an \textbf{elementary submodel} of $H$ if for any $a_1,a_2,\dots,a_n \in M$, and any $n$-place formula $\varphi$ of first order logic,
$$(M,\in) \Vdash \varphi(a_1,a_2,\dots,a_n) \ \ \ \Leftrightarrow \ \ \ (H,\in) \Vdash \varphi(a_1,a_2,\dots,a_n).$$
In other words, $M$ is an elementary submodel of $H$ if the structures $(M,\in)$ and $(H,\in)$ agree on the truth or falsity of every statement that can be expressed in first order logic and mentions only members of $M$.
We may think of $(H,\in)$ as a structure modeling some important statements (such as $\mathsf{ZFC}$), and $(M,\in)$ as a microcosm for this structure, that models all the same statements, and is also countable.

\begin{theorem}\label{LSthm}
Given an uncountable set $H$ and a countable set \(C \subseteq H\),
there is a countable set $M$ such that \(C \subseteq M \subseteq H\), and $M$ is an elementary submodel of $H$.
\end{theorem}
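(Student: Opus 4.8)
The plan is to realize $M$ as a Skolem hull of $C$, built by repeatedly adjoining witnesses to existential statements. The engine for this is the Tarski–Vaught criterion: a nonempty subset $M \subseteq H$ is an elementary submodel of $H$ exactly when, for every first-order formula $\varphi(x, y_1, \dots, y_n)$ and all parameters $a_1, \dots, a_n \in M$, if $(H, \in) \Vdash \exists x\, \varphi(x, a_1, \dots, a_n)$ then there is already a witness $b \in M$ with $(H, \in) \Vdash \varphi(b, a_1, \dots, a_n)$. I would first establish this criterion by induction on the complexity of $\varphi$, showing that $(M,\in) \Vdash \psi(a_1,\dots,a_n) \Leftrightarrow (H,\in) \Vdash \psi(a_1,\dots,a_n)$ for every $\psi$ and all parameters from $M$: the atomic case holds because $M$ inherits the membership relation from $H$, the Boolean connectives follow immediately from the inductive hypothesis, and the existential case is handled in one direction trivially and in the other direction precisely by the criterion together with the inductive hypothesis. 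This induction is the conceptual core of the argument, and the one step I expect to require genuine care.

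Granting the criterion, the construction is a routine closure. Fix, using the Axiom of Choice, a function that selects for each formula $\varphi(x, y_1, \dots, y_n)$ and each tuple $\bar a = (a_1, \dots, a_n)$ from $H$ with $(H, \in) \Vdash \exists x\, \varphi(x, \bar a)$ a single witness $w_{\varphi, \bar a} \in H$. For a set $X \subseteq H$, let $X'$ consist of $X$ together with all witnesses $w_{\varphi, \bar a}$ as $\varphi$ ranges over all formulas and $\bar a$ ranges over finite tuples from $X$. When $X$ is countable, $X'$ is again countable: formulas are finite strings over the countable alphabet of first-order logic and so are countable in number, and a countable set admits only countably many finite tuples.

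I would then set $M_0 = C$, $M_{n+1} = M_n'$, and $M = \bigcup_{n < \omega} M_n$. Being a countable union of countable sets, $M$ is countable, and plainly $C \subseteq M \subseteq H$. To verify the Tarski–Vaught criterion for $M$, note that any parameters $a_1, \dots, a_n \in M$ all lie in a single stage $M_k$; hence if $(H,\in) \Vdash \exists x\, \varphi(x, a_1, \dots, a_n)$, then the chosen witness $w_{\varphi, \bar a}$ was placed into $M_{k+1} \subseteq M$. Thus $M$ satisfies the criterion and is an elementary submodel of $H$, completing the proof. The only delicacy beyond the inductive verification of the criterion is the reliance on the Axiom of Choice, both to fix the witness-selecting function and to ensure the countable union of countable stages remains countable.
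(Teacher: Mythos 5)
Your proposal is correct and takes essentially the same route as the paper, which proves Theorem~\ref{LSthm} by (in its words) ``recursively adding to $M$ witnesses for existential first order statements, and doing this in a minimalistic fashion so as not to make $M$ uncountable'' --- precisely the Skolem-hull closure you carry out, with the Tarski--Vaught criterion spelled out in detail. The only refinement you add beyond the paper's sketch is the explicit induction establishing that closure under witnesses suffices for elementarity, together with the observation that the Axiom of Choice is used both for the witness-selection function and for the countability of the union.
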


One way to view this theorem is to think in terms of ``witnesses'' as in the discussion at the beginning of this section. Given some existential statement $\varphi$ of the form $\exists z \,\psi(z,a_1,\dots,a_n)$ concerning some members $a_1,a_2,\dots,a_n$ of $M$, if $H$ contains a witness to this statement, then so must $M$. More precisely, if  $(H,\in) \Vdash \exists z\,\psi(z,a_1,\dots,a_n)$ (which is equivalent to there being some $w \in H$ with $(H,\in) \Vdash \psi(w,a_1,\dots,a_n)$), then $(M,\in) \Vdash \exists z\,\psi(z,a_1,\dots,a_n)$ (equivalently, $M$ must contain some $w'$ such that $(M,\in) \Vdash \psi(w',a_1,\dots,a_n)$). (Indeed, this is how Theorem~\ref{LSthm} is proved, by recursively adding to $M$ witnesses for existential first order statements, and doing this in a minimalistic fashion so as not to make $M$ uncountable.) For example, if $t$ is a sequence of members of $M$ and $x \in M$, then the statement ``$x \notin E_t$" is expressible in first order logic, and this statement is witnessed by some $a \in \mathbb R$ as described above. If $H$ contains a witness to this statement, and $M$ is an elementary submodel of $H$, then $M$ must contain some witness to this statement as well.
In fact, this is precisely how Theorem~\ref{LSthm} enables us to rescue the proof idea outlined earlier, by giving us a countable $M$ that is rich enough to contain witnesses to all statements of the form $x \notin E_t$, where $t$ is a finite sequence of members of $M$. 

If it were possible, we would like to use for $H$ the universe of all sets: this way a statement is true if and only if $(H,\in)$ models that statement, and $H$ contains witnesses to every existential statement.
However, Theorem~\ref{LSthm} states that $H$ should be a set, and the universe of all sets is not a set but a proper class. Indeed, G\"{o}del's Second Incompleteness Theorem tells us that (unless $\mathsf{ZFC}$ is inconsistent), it is impossible to prove from $\mathsf{ZFC}$ alone that there is a set $M$ such that $(M,\in) \Vdash \mathsf{ZFC}$. And if $H$ were the universe of all sets, then this is precisely what $M$ would do.

Fortunately, there is a standard workaround for this problem. We will take the set $H$ to be the set of all sets hereditarily of size $\leq\! \kappa$ for some sufficiently large cardinal $\kappa$. The
structure $(H,\in)$ satisfies all the axioms of $\mathsf{ZFC}$ except for the power
set axiom, and even this fails only for sets $X$ with $|\mathcal P(X)| > \kappa$. This
makes $H$ a good substitute for the universe of all sets. Indeed, if $\kappa$ is
larger than any set we plan to mention in our proof, then $H$ satisfies $\mathsf{ZFC}$ for
all practical purposes. 

\begin{theorem}\label{willProof}
Bob has a winning strategy for \(BG(W)\) if and only if
\(W\) is countable.
\end{theorem}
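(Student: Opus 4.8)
The plan is to prove the two directions separately, with essentially all the content in one of them. The direction ``$W$ countable $\Rightarrow$ Bob wins'' is immediate from the earlier results: by Proposition~\ref{bakerMainBob} Bob has a winning strategy in \(CG(W)\) whenever \(W\) is countable, and by the theorem equating the two games a winning strategy for Bob in \(CG(W)\) yields one in \(BG(W)\). So the whole point is the converse, which I would prove in contrapositive form: if \(W\) is uncountable, then every strategy \(\sigma\) for Bob admits a successful counter-strategy for Alice. By Observation~\ref{bobStrat} I may assume \(\sigma\) only outputs rationals.

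Fix such a \(\sigma\). Following the discussion preceding Theorem~\ref{LSthm}, I would take \(H\) to be the sets hereditarily of size \(\le\kappa\) for a sufficiently large \(\kappa\), and use Theorem~\ref{LSthm} with \(C=\{\sigma,W,\mathbb R\}\) to obtain a countable elementary submodel \(M\) with \(\sigma,W,\mathbb R\in M\). Since \(M\) is countable and \(W\) is uncountable, I can fix a target \(x\in W\setminus M\). Alice's counter-strategy will be to keep \(x\) legal forever while playing only moves drawn from \(M\); if she succeeds then \(a_n<x<b_n\) for every \(n\), so \(x\) survives as a legal member of \(W\) after \(\omega\) rounds and Alice wins \(BG(W)\). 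Thus it suffices to show that from any legal position \(t=\tuple{a_0,\dots,a_{n-1}}\) with all \(a_i\in M\) and \(x\) still legal, Alice has a reply \(a_n\in M\) with \(a_{n-1}<a_n<x\) and \(x<\sigma(t\concat\tuple{a_n})\).

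The key step, and the main obstacle, is guaranteeing that this reply lies inside the countable set \(M\); this is exactly where the na\"{i}ve rational-restriction argument broke down. Here I would invoke the standard closure property of elementary submodels: if \(X\in M\) and \(H\) thinks \(X\) is countable, then \(X\subseteq M\) (enumerate \(X\) by some \(f\in M\) and note \(f(k)\in M\) for each \(k\in\omega\subseteq M\)). Applied to \(\mathbb Q\) this gives \(\mathbb Q\subseteq M\), and applied to the set \(E_t\) of Remark~\ref{willPartial} -- which is definable from \(\sigma,t\in M\), hence lies in \(M\), and is countable by the well-foundedness argument there -- it gives \(E_t\subseteq M\). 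Since \(x\notin M\) we get \(x\notin E_t\), so by the definition of \(E_t\) there is a \emph{real} witness \(a^{*}\in(a_{n-1},x)\) with \(\sigma(t\concat\tuple{a^{*}})>x\).

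It remains to replace this real witness by one in \(M\). Writing \(q=\sigma(t\concat\tuple{a^{*}})\in\mathbb Q\subseteq M\) (so \(q>x\)), the set \(V=\{a>a_{n-1}:\sigma(t\concat\tuple{a})\ge q\}\) is definable from \(\sigma,t,a_{n-1},q\in M\) and hence lies in \(M\), and \(a^{*}\in V\) forces \(\inf V<x\). Choosing a rational \(r\) with \(\inf V<r<x\) (possible by density, and then \(r\in M\) automatically), the set \(V\cap(a_{n-1},r)\) is a nonempty member of \(M\), so by elementarity it contains some \(a_n\in M\). This \(a_n\) satisfies \(a_{n-1}<a_n<r<x\) and \(\sigma(t\concat\tuple{a_n})\ge q>x\), which is precisely the legal \(M\)-reply keeping \(x\) legal. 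Iterating through all \(\omega\) rounds yields Alice's counter-strategy and completes the contrapositive. The delicate part throughout is the bookkeeping that every object Alice reasons about -- the position \(t\), the sets \(E_t\) and \(V\), and the rational \(r\) -- stays inside \(M\), so that elementarity can repeatedly be used to pull witnesses down into the countable model.
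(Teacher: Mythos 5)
Your proposal is correct, and it shares the paper's skeleton: a countable elementary submodel \(M\) containing \(\sigma\), the countable sets \(E_t\) of Remark~\ref{willPartial}, and a round-by-round appeal to elementarity to hand Alice replies inside \(M\) that keep a fixed target \(x\) legal forever. Where you genuinely diverge is in how the target is chosen and which property of \(M\) is invoked. The paper forms the explicit countable union \(E=\bigcup\{E_t:t\in(M\cap\mathbb R)^{<\omega}\}\) and picks \(x\in W\setminus E\); the only feature of elementary submodels it ever uses is the witness property (a nonempty set definable from parameters in \(M\) must meet \(M\)), which is exactly the feature it develops for its general audience. You instead pick \(x\in W\setminus M\) and deduce \(x\notin E_t\) from the closure lemma that a countable member of \(M\) is a subset of \(M\), applied to \(E_t\in M\). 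This is the slicker, more standard set-theoretic idiom --- it eliminates the bookkeeping union \(E\) entirely --- but it costs an additional lemma (whose proof needs \(\omega\subseteq M\) and an enumeration of \(E_t\) pulled into \(M\) by elementarity), so in a self-contained write-up you should prove it rather than just cite it. Your witness-pulling step is likewise a mild variant: instead of the paper's two padding rationals \(p<x<q\) and its set \(A\), you use Bob's own rational reply \(q=\sigma(t\concat\tuple{a^{*}})\) as the upper threshold (a nice direct exploitation of Observation~\ref{bobStrat}) together with one rational \(r<x\), and apply elementarity to the nonempty set \(V\cap(a_{n-1},r)\in M\); this works for the same reason the paper's step does. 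One small repair: define \(V\) as \(\{a\in(a_{n-1},\sigma(t)):\sigma(t\concat\tuple{a})\geq q\}\) rather than \(\{a>a_{n-1}:\dots\}\), since \(\sigma\) is only defined on legal positions; with that restriction everything you do with \(V\) goes through verbatim.
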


\begin{proof}
We proceed with the terminology and assumptions established in Remark \ref{willPartial}.

Let $H$ denote the set of all sets that are hereditarily of size $\leq |\mathbb R|$. The important facts about this set $H$ are that it contains all real numbers, all finite sequences of real numbers, all sets of real numbers, and all possible strategies for any instance of Baker's Game. Furthermore, if $E \subseteq \mathbb R$ is well founded if and only if $(H,\in) \Vdash$ ``$E$ is well founded," and similarly for other basic statements about reals and sets of reals. In other words, $(H,\in)$ correctly interprets all the statements about reals and sets of reals that are made in the remainder of this proof.

Let \(M\) be a countable elementary submodel of $H$ containing
\(\sigma\) and containing every rational number. (In fact, let us note that this latter requirement is redundant, by an argument from elementarity. For example, $M$ contains a given rational number $q$ because $q$ is definable without parameters, i.e. there is a statement $\varphi$ of first order logic that is satisfied by the number $q$ and no other, and $M$ contains a witness to the statement $\varphi$, which must be $q$.) 
We claim that Alice has a
successful counter-strategy to $\sigma$, one that plays only real numbers
from the countable set \(M\).

To see this, let \(E=\bigcup\{E_t:t\in(M\cap\mathbb R)^{<\omega}\}\), where \((M\cap\mathbb R)^{<\omega}\) denotes the set of
all the finite sequences of real numbers in \(M\).
This is a countable union of countable sets, hence countable.

Fix \(x\not\in E\). Our goal is to inductively describe a counter-strategy for Alice according to which $x$ remains legal throughout the entire play of the game (assuming Bob plays according to $\sigma$). Suppose that by round $n$ of the game, Alice has previously chosen
\(t=\tuple{a_0,\dots,a_{n-1}}\in(M\cap\mathbb R)^{<\omega}\)
such that \(x\) remains legal; that is, \(a_{n-1}<x<\sigma(t)\)
(where we may say \(a_{-1}=-\infty\) and \(\sigma(\tuple{})=+\infty\)
for the initial case $n=0$).
Since \(x\not\in E_t\), as in Remark \ref{willPartial},
there exists \(a'\in(a_{n-1},x)\) with
\(x<\sigma(t\concat\tuple{a'})\). While this choice would
keep \(x\) legal, as we observed earlier \(a'\) could be any
number in the interval \((a_{n-1},x)\), most of which
don't belong to \(M\). We assumed Alice only plays members of $M$ (in order to ensure $E$ is countable), so we cannot complete our induction by having Alice play $a'$ in round $n$.

To deal with this, we may consider the following inequality.
\[a_{n-1}<a'<x<\sigma(t\concat\tuple{a'})<\sigma(t)\]
Note that \(x\) and \(a'\) (and therefore
\(\sigma(t\concat\tuple{a'})\)) may or may not
belong to \(M\). However, we may choose rational numbers \(p,q\)
to pad between them as follows.
\[a_{n-1}<a'<p<x<q<\sigma(t\concat\tuple{a'})<\sigma(t).\]
With this choice of $p$ and $q$, let us define the set
\[A=\{a\in(a_{n-1},p):\sigma(t\concat\tuple{a})\in(p,\sigma(t))\}.\]

We have already proven \(A\not=\emptyset\), because \(a'\in A\).
And this is where we exploit that \(M\) isn't just some
arbitrary countable set, but an elementary submodel for $H$. 
Because
\(A\not=\emptyset\), and because $H$ is sufficiently large and sufficiently rich, $(H,\in) \Vdash A \neq \emptyset$. 
In other words, $H$ contains a witness (for example $a'$) to the fact that there is a real number satisfying the statement defining $A$.
This fact must also be witnessed
by \(M\). 
That is, there is some \(a_n\in M\) such that
\(a_n\in A\). 
Then this is exactly what we need: \(a_n\) is
a valid choice for Alice from \(M\) that keeps \(x\) legal:
\[a_{n-1}<a_n<x<\sigma(t\concat\tuple{a_n})<\sigma(t).\]
To complete the inductive description of Alice's strategy, we simply require that she play some such $a_n \in M$ in round $n$ of the game.

So finally, given an uncountable \(W\) and strategy \(\sigma\)
for Bob, Alice may choose
any \(x\in W\setminus E\), and apply the above counter-strategy
to defeat \(\sigma\).
\end{proof}

The reader interested in a more careful and detailed introduction
of this technique is directed to \cite{zbMATH04139731}.

\section{Coding Strategies}

Any proof utilizing elementary submodels may be unpacked
into a proof that avoids them; essentially they are a
convenient meta-mathematical tool to abstract away
closing-up arguments, in this case, providing Alice a desired
countable moveset without constructing it explicitly.
However, this particular result is well-suited for another
style of proof common in the theory of infinite games,
that provides another mechanism to wave away any complicated bookkeeping
of Alice's moves.

\begin{definition}
A strategy that is defined only in terms of the most recent
move by each player (and ignores any moves prior to those)
is known as a \term{coding strategy}.
\end{definition}

We will see shortly why the word ``coding'' is used here.
But first, we will strengthen Proposition
\ref{bakerMainBob} in terms of such strategies.

\begin{proposition}
If \(W\) is countable, then Bob has a winning coding strategy
for \(BG(W)\).
\end{proposition}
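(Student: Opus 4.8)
The plan is to reuse the one-element-per-round elimination idea that underlies Proposition~\ref{bakerMainBob}, but to make it \emph{memoryless} by having Bob encode a counter into the precise rational he plays. First I would fix an enumeration \(W=\{w_k:k<\omega\}\) (if \(W=\emptyset\) the claim is trivial) and write \(\alpha=\lim_{n\to\infty}a_n\) and \(\beta=\lim_{n\to\infty}b_n\); by the definition of \(BG(W)\), Bob wins exactly when \(W\cap[\alpha,\beta]=\emptyset\). The key observation is that a single number \(w_k\) is easy to \emph{eliminate} (force out of \([\alpha,\beta]\)) in one round: if, on Alice's move \(a_n\), we have \(w_k\le a_n\), then \(w_k\) is already eliminated, since either \(\alpha\ge a_n>w_k\) outright or else \(w_k=a_n\) and Alice's necessarily larger next move gives \(\alpha\ge a_{n+1}>w_k\); and if \(w_k>a_n\), then Bob can play a rational \(b_n\in(a_n,\min(b_{n-1},w_k))\), forcing \(\beta\le b_n<w_k\). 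Thus a full-information Bob who handles \(w_k\) during round \(k\) defeats Alice, because every \(w_k\) is eliminated by the end of its round and eliminations are permanent (\(\alpha\) only rises and \(\beta\le b_n\) for every \(n\)). The only thing this strategy consults from the history is the round number.

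To remove that dependence, I would have Bob \emph{code} the round number (equivalently, the index of the element he is about to handle) into his move. I would fix a partition \(\mathbb Q=\bigsqcup_{m<\omega}Q_m\) into countably many sets, each dense in \(\mathbb R\); such a partition exists by a standard construction, and density guarantees that each \(Q_m\) meets every nonempty open interval. Bob's coding strategy \(\tau\) then maintains the invariant that his round-\(n\) move lies in \(Q_n\). Reading \(b_{n-1}\in Q_{n-1}\) tells Bob it is round \(n\), so he should handle \(w_n\); he plays a rational from \(Q_n\) lying in the appropriate interval above, namely in \((a_n,b_{n-1})\) when \(w_n\le a_n\) and in \((a_n,\min(b_{n-1},w_n))\) otherwise, which is possible precisely because \(Q_n\) is dense. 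The first move is specified directly: Bob handles \(w_0\) and plays in \(Q_0\). Since \(\tau\) consults only \(a_n\) and \(b_{n-1}\), it is a coding strategy, which is also where the word ``coding'' earns its keep.

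The verification is then routine: the invariant forces Bob to handle \(w_k\) exactly in round \(k\), so by the first paragraph every element of \(W\) is knocked out of \([\alpha,\beta]\), whence \(W\cap[\alpha,\beta]=\emptyset\) and Bob wins. I expect the main obstacle to be the coding bookkeeping rather than the game theory: one must exhibit the dense partition \(\{Q_m\}\) and check that in every round Bob can \emph{simultaneously} make a legal, eliminating move and place it in the single class \(Q_n\) dictated by the counter, which is exactly the point at which density is invoked. The remaining care-points are minor: the equality case \(w_k=a_k\), which is eliminated only via Alice's subsequent larger move, and the first-round convention, in which Bob has no previous move to decode and so his opening play is prescribed outright.
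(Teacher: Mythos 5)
Your proof is correct, and its engine is the same as the paper's: enumerate \(W=\{w_n:n<\omega\}\) and arrange that \(w_n\) is permanently expelled from \([\alpha,\beta]\) during round \(n\). Where you genuinely differ is in how this per-round attack is made memoryless. The paper's Bob checks whether \(w_n\) is still legal and, if so, plays \(w_n\) itself (a number is not strictly less than itself, so this makes \(w_n\) illegal on the spot); otherwise he plays any legal number. As written, that rule consults the round number \(n\): the paper's parenthetical remark covers only the legality check, not how a strategy that sees just the two most recent moves knows \emph{which} \(w_n\) to attack. Your partition of \(\mathbb{Q}\) into countably many dense sets \(Q_m\), with the invariant that Bob's round-\(n\) move lands in \(Q_n\), repairs exactly this point, and it is a miniature of the history-coding trick the paper itself uses later to convert an arbitrary winning strategy into a coding strategy (there a bijection between finite sequences of reals and dense classes codes the entire history; you only need to code \(n\)). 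So your argument is longer, but it is airtight under the paper's strict definition of coding strategy and makes the word ``coding'' do visible work already in this proposition; the price is the dense-partition bookkeeping and the small case analysis around \(w_k\le a_n\) (which the paper's ``play \(w_n\) itself'' move sidesteps, since \(w_n = a_n\) is already illegal by the definition of legality). It is worth noting that the round counter can also be eliminated with no coding at all: have Bob always attack the \emph{least-indexed} element of \(W\) that is still legal, a rule depending only on the last two moves. If some \(w_j\) remained legal to the end of the game, it would be legal in every round, so Bob would attack some element in every round; the least legal index then strictly increases from round to round, yet it could never exceed \(j\) --- a contradiction.
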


\begin{proof}
Let \(W=\{w_n:n<\omega\}\). Then during round \(n\), Bob's
winning coding strategy inspects whether \(w_n\) is legal
(which only requires knowledge of each player's most recent
move). If so, Bob chooses \(w_n\); if not, Bob chooses any
legal number. Since this strategy ensures during round \(n\)
that either \(w_n\) is already illegal, or it makes \(w_n\)
illegal by choosing it, we see that no \(w_n\) is left legal at the end
of the game, and thus Bob wins.
\end{proof}

We now will show the converse holds as well. This is done
by taking the idea of Theorem \ref{willProof}, but as the
strategy we consider here is a coding strategy, we will see that no
elementary submodel is necessary to achieve our result.

\begin{theorem}\label{stevenProof}
Bob has a winning coding strategy for \(BG(W)\) if and only if
\(W\) is countable.
\end{theorem}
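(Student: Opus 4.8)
The plan is to prove only the nontrivial direction, since the forward implication (if $W$ is countable then Bob has a winning coding strategy) is exactly the Proposition immediately preceding the theorem. For the converse I would argue the contrapositive: given an uncountable $W$ and \emph{any} coding strategy $\sigma$ for Bob, I will produce $x\in W$ and a counter-strategy for Alice keeping $x$ legal forever, so that $\sigma$ is not winning. By Observation~\ref{bobStrat} I may assume $\sigma$ is rational-valued. The whole point, as the text advertises, is that a coding strategy depends only on the two most recent moves, so I may write Bob's reply as $\sigma(q,a)$, where $q$ is his previous (rational) move and $a$ is Alice's latest move; legality means $a<\sigma(q,a)<q$. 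The \emph{countable} collection of possible ``states'' $q\in\mathbb Q$ is what replaces the elementary submodel $M$ of Theorem~\ref{willProof}.

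Next I would introduce the localized analogue of the sets $E_t$. For rationals $p<q$ (allowing the formal value $q=+\infty$ to cover Bob's opening move), define
\[E_{p,q}=\{x\in(p,q):\forall a\in(p,x),\ \sigma(q,a)\le x\},\]
the set of reals that Bob is sure to eliminate this round given that his last move was $q$ and that Alice's move lies in $(p,x)$. The key lemma is that each $E_{p,q}$ is countable. I would prove this by well-foundedness, exactly mirroring the argument in Remark~\ref{willPartial}: on $[p+\tfrac1k,q)$ any infinite decreasing sequence $S\subseteq E_{p,q}$ has a finite infimum $a>p$, and since Bob's legal reply satisfies $\sigma(q,a)>a$, some $s\in S$ obeys $a<s<\sigma(q,a)$ with $a\in(p,s)$, contradicting $s\in E_{p,q}$. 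Hence each $E_{p,q}\cap[p+\tfrac1k,q)$ is well-founded and therefore countable, and $E_{p,q}=\bigcup_k E_{p,q}\cap[p+\tfrac1k,q)$ is countable. Since there are only countably many rational pairs, $E=\bigcup_{p,q}E_{p,q}$ is countable.

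Then I would fix $x\in W\setminus E$ (possible because $W$ is uncountable) and describe Alice's play, maintaining inductively that $a_{n-1}<x<b_{n-1}$ with $b_{n-1}$ rational (taking $a_{-1}=-\infty$, $b_{-1}=+\infty$ in round $0$). At round $n$ Alice picks a rational $p\in(a_{n-1},x)$; since $x\notin E_{p,b_{n-1}}$, there is a witness $a\in(p,x)$ with $\sigma(b_{n-1},a)>x$, and Alice plays $a_n=a$, giving
\[a_{n-1}<a_n<x<\sigma(b_{n-1},a_n)<b_{n-1},\]
so $x$ stays legal and $b_n$ is again rational. After $\omega$ rounds $\lim_n a_n\le x\le \lim_n b_n$, so by the inclusive winning condition for $BG(W)$ Alice wins, defeating $\sigma$.

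The step I expect to be the main obstacle is the same one that forced the elementary submodel in Theorem~\ref{willProof}: the witnessing move $a$ is some real number, not necessarily rational or lying in any preassigned countable set, and crucially it must exceed Alice's previous move $a_{n-1}$ to be a legal play. The coding structure is precisely what defuses this. By localizing the bad set with a \emph{rational} lower bound $p$ and a \emph{rational} state $q$, I get only countably many sets $E_{p,q}$ (so their union stays countable, with no submodel needed), while simultaneously forcing the witness into $(p,x)\subseteq(a_{n-1},x)$, hence into the range of legal moves for Alice. The one technical point demanding care is the countability of $E_{p,q}$ for decreasing sequences that converge down to the endpoint $p$, where the naive well-foundedness argument stalls; stratifying by $p+\tfrac1k$ is what handles this cleanly.
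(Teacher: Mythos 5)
Your proposal is correct and follows essentially the same route as the paper's proof: the same localized sets $E_{p,q}$ indexed by a rational lower bound and Bob's rational (or $\infty$) last move, the same well-foundedness argument stratified away from the lower endpoint, and the same counter-strategy in which Alice inserts a fresh rational $p$ each round to recover a legal witness keeping $x$ alive. The only differences are notational (the paper writes $E_{q,\beta}$ for your $E_{p,q}$), and your write-up makes explicit a few points the paper leaves implicit, such as why the witness automatically lands above Alice's previous move.
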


\begin{proof}
Let \(\sigma\) be an arbitrary coding strategy for Bob,
where \(\sigma(b,a)\) is Bob's next choice given prior choices
\(b\) by Bob and \(a\) by Alice; for Bob's initial choice we
use the convention \(\sigma(a)=\sigma(\infty,a)\). By 
Observation \ref{bobStrat} we may assume \(\sigma\) only chooses rationals.
We proceed by defining a countable
set \(E\) such that for each \(x\not\in E\),
Alice has a counter-strategy such that \(x\) will always remain
legal at the end of the game.

We begin by defining for each \(\beta\in\mathbb Q\cup\{\infty\}\)
and \(q\in\mathbb Q\),
\[E_{q,\beta}=\left\{x\in(q,b):\forall a
\big(a\in(q,x)\Rightarrow x\geq \sigma(\beta,a)\big)\right\}\]
that is,
\(E_{q,\beta}\) contains all \(x\) above some rational \(q\)
and below Bob's most recent move \(\beta\), such that for any
choice \(a\) by Alice keeping \(x\) legal, Bob's strategy
\(\sigma\) responds by making it illegal.
We claim that this set contains no infinite decreasing sequence bounded
above \(q\). To see
this, let \(S\) be such a sequence and let \(a=\inf S\in(q,\beta)\); then
\(a\in(q,s)\) and \(s<\sigma(\beta,a)\)
for some \(s\in S\), showing \(s\not\in E_{q,\beta}\).
It follows that \(E_{q,\beta}\cap[q+\varepsilon,\infty)\) is well-founded and
therefore countable for each \(\varepsilon>0\),
and thus \(E_{q,\beta}\) is countable.

It follows that
\(E=\bigcup\{E_{q,b}:q\in\mathbb Q,b\in\mathbb Q\cup\{\infty\}\}\) is
countable.
Given \(x\not\in E\), we now may describe
Alice's desired counter-strategy.
Suppose we are given previous moves
\(\alpha\in\mathbb R,\beta\in\mathbb Q\cup\{\infty\}\)
of the game with \(x\) legal, that is, \(\alpha<x<\beta\).
Choose \(q\in\mathbb Q\cap (\alpha,x)\).
Then as \(x\not\in E_{q,\beta}\) and \(x\in(q,b)\), there exists
\(a\in(q,x)\) such that \(w<\sigma(\beta,a)\).
Then \(b=\sigma(\beta,a)\in\mathbb Q\) is Bob's next move,
and observe that \(a<x<b\). Since Alice's counter-strategy
ensures \(x\) is kept legal throughout the game,
whenever \(W\) is uncountable
she may choose any \(x\in W\setminus E\) to defeat Bob's strategy.
\end{proof}

In general, having a winning strategy that considers the full
history of a game is not sufficient to have a winning coding
strategy.
However, often a winning coding
strategy will be defined in terms of a
(full-information) winning strategy; this is possible
in cases where the strategy can also \textit{encode}
the history of the game needed by full-information strategy
into the player's own move each round. This is the case for
Bob in Baker's Game.

\begin{theorem}
Bob has a winning strategy for \(BG(W)\) if and only if
Bob has a winning coding strategy.
\end{theorem}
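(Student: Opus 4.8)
The plan is to prove the nontrivial direction in a way that exhibits the \emph{encoding} phenomenon advertised above, rather than by the quickest route. The forward direction is immediate: a coding strategy is in particular a strategy, so a winning coding strategy is a winning strategy. For the converse, suppose Bob has a winning strategy; by Theorem~\ref{willProof} this forces $W$ to be countable. One could now finish in a single line by invoking Theorem~\ref{stevenProof}, but I would instead give an explicit construction, since the interest of the statement lies in \emph{how} a coding strategy can recover the information that a full-information strategy would use.

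The key difficulty, and the reason one cannot simply ``code up'' an arbitrary winning strategy $\sigma$ directly, is that $\sigma$ may depend on Alice's entire history $\tuple{a_0,\dots,a_n}$, a finite sequence of arbitrary real numbers. A coding strategy sees only the two most recent moves, and although Bob plays rationals and can use the \emph{value} of his rational move to smuggle information forward to his next turn, a single rational can encode at most a natural number, never an arbitrary real. Thus there is no hope of having Bob remember Alice's past real-valued moves. The resolution is to discard $\sigma$ and instead code the combinatorially trivial winning strategy furnished by countability: enumerate $W=\{w_n:n<\omega\}$ and have Bob attempt to kill $w_n$ in round $n$. The only hidden state this strategy requires is the round counter $n$, a single natural number, and a natural number is exactly what Bob can carry in the value of his move.

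Concretely, I would first fix a partition $\{D_k:k<\omega\}$ of $\mathbb Q$ into pairwise disjoint dense subsets, so that every interval of reals meets every $D_k$. Bob then maintains the invariant that his round-$n$ move lies in $D_n$; decoding is immediate, since from his previous move $b_{n-1}\in D_{n-1}$ he recovers the index $n-1$, hence the current round $n$ (the initial convention $\beta=\infty$ flags round $0$). Having recovered $n$ and seeing Alice's current move $a_n$ together with his own previous move $b_{n-1}$, Bob checks whether $w_n$ is legal, i.e.\ whether $a_n<w_n<b_{n-1}$. If so, he selects $b_n\in D_n\cap(a_n,w_n)$, which is nonempty by density; this is a legal move for Bob, and since $b_n<w_n$ it renders $w_n$ illegal forever. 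If $w_n$ is not legal, he plays any $b_n\in D_n\cap(a_n,b_{n-1})$. This is a genuine coding strategy, consulting only the last move of each player, and it is winning because each $w_n$ is illegal by the end of round $n$, so no element of $W$ survives.

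I expect the main obstacle to be conceptual rather than technical: recognizing that the theorem cannot be established by literally encoding a given full-information strategy, since the real-valued history of Alice's play is an insurmountable barrier, and that the correct move is to route through countability and encode only the round number. Once that is seen, the remaining verifications, namely that the $D_k$ may be chosen dense, that the required intersections are nonempty, and that the round index is faithfully recovered from Bob's previous move, are routine.
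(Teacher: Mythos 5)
Your proof is logically sound (given the paper's earlier results), but it takes a genuinely different route from the paper, and the reason you give for choosing that route is mathematically wrong. The paper proves the nontrivial direction by doing exactly what you declare impossible: it encodes Alice's entire real-valued history into Bob's moves. The trick you missed is that Bob need not play rationals when we are \emph{constructing} a coding strategy --- Observation \ref{bobStrat} is a without-loss-of-generality improvement applied to a given strategy under analysis, not a constraint on strategies we build. The paper partitions $\mathbb R$ into the cosets of $\mathbb Q$, obtaining $|\mathbb R|$-many pairwise disjoint sets each dense in $\mathbb R$, fixes a bijection $f:\mathbb R^{<\omega}\to\mathcal D$, and has Bob reply inside $f(t\concat\tuple{a})\cap(a,b)\cap(a,\beta)$, where $b=\sigma(t\concat\tuple{a})$ is the full-information reply; at his next turn Bob decodes the history as the unique $t\concat\tuple{a}$ whose coset contains his own previous move. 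Since the resulting coding strategy $\tau$ always plays below $\sigma$, Lemma \ref{bobSmaller} shows it is winning. So a single real move can carry an arbitrary finite sequence of reals, and your ``insurmountable barrier'' is not one: the cardinality obstruction you describe only exists if Bob is restricted to countably many possible moves.

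Your alternative route --- winning strategy $\Rightarrow$ $W$ countable by Theorem \ref{willProof} $\Rightarrow$ winning coding strategy by an explicit kill-$w_n$ construction --- is valid, and your device of encoding the round counter via countably many disjoint dense subsets of $\mathbb Q$ is a genuine improvement on the Proposition preceding Theorem \ref{stevenProof}: as stated there, Bob's strategy must ``inspect $w_n$ during round $n$,'' yet a coding strategy in the paper's strict sense does not know the round number, so your bookkeeping is exactly what is needed to make that argument literal. However, your route carries a structural cost in context. Section 3 is intended to give a proof of ``Bob wins iff $W$ is countable'' that avoids elementary submodels entirely, with the final Corollary obtained by chaining the present theorem with Theorem \ref{stevenProof}. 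Because your proof of the present theorem invokes Theorem \ref{willProof}, the elementary submodel machinery re-enters through the back door, and the coding-strategy development no longer stands as an independent second proof of the main result --- which is the paper's whole reason for proving this theorem by direct simulation.
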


\begin{proof}
First, let
\(x\sim y\Leftrightarrow x-y\in\mathbb Q\). Note that \(\sim\) is
an equivalence relation where each equivalence class \([x]\) is a
copy of \(\mathbb Q\). Let \(\mathcal D\) be the partition
of \(\mathbb R\) defined by \(\sim\), noting
\(|\mathcal D|=|\mathbb R|\) and each \(D\in\mathcal D\)
is dense in \(\mathbb R\).
Finally, let \(\mathbb R^{<\omega}\) collect all finite-length
sequences of real numbers, where \(|\mathbb R^{<\omega}|=|\mathbb R|\).
Thus, we may fix a bijection
\(f:\mathbb R^{<\omega}\to\mathcal D\).

Let \(\sigma\) be a winning strategy for Bob.
We define the coding strategy \(\tau\) as follows.
To define Bob's first move \(\tau(a)\), consider
the move \(b=\sigma(\tuple{a})\) according to the winning
full-information strategy. As \(f(\tuple{a})\)
is dense in \(\mathbb R\), its intersection with the open set \((a,b)\) must be nonempty, so choose \(\tau(a)\in f(\tuple{a})\cap(a,b)\).
Note that in the following round, Bob will be able to decode
\(\tuple{a}\) from his prior move (which he ``sees'' not as \(\sigma(\tuple{a})\) but only as some
real number \(\beta\)) as it will be the unique \(\tuple{a}\in\mathbb R^{<\omega}\)
with \(\beta\in f(\tuple{a})\). Additionally, this choice
\(\tau(a)\) for Bob is strictly less than the choice 
\(b=\sigma(\tuple{a})\) given by the winning full information strategy.

Then to define subsequent moves \(\tau(\beta,a)\) for \(a<\beta\),
let \(t\in\mathbb R^{<\omega}\) be the unique sequence of real numbers
such that \(\beta\in f(t)\). Again consider the move
\(b=\sigma(t\concat\tuple{a})\) that would be made according to the winning full-information
strategy in response to the sequence \(t\concat\tuple{a}\) of Alice's moves so far. As in the base case (but adding the additional requirement that moves must be less than \(\beta\)), Bob may choose
\(\tau(\beta,a)\in f(t\concat\tuple{a})\cap (a,b)\cap(a,\beta)\).
Note that again the sequence of moves \(t\concat\tuple{a}\) has
been successfully encoded into Bob's move \(b'=\tau(\beta,a)\) and may be extracted
in the next round by choosing the unique \(t\concat\tuple{a}\)
with \(b'\in f(t\concat\tuple{a})\).

We then note that by construction, the coding strategy \(\tau\) produces
legal moves for Bob that are strictly less than those that would have
been chosen by \(\sigma\) for the same choices of Alice.
Since \(\sigma\) is winning, so is \(\tau\) by Lemma \ref{bobSmaller}.
\end{proof}

\begin{corollary}
Bob has a winning strategy for \(BG(W)\) if and only if
Bob has a winning coding strategy if and only if
\(W\) is countable.
\end{corollary}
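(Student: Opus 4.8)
The plan is to observe that this three-way equivalence is nothing more than the conjunction of results already established, so the proof reduces to chaining them together. First I would recall Theorem~\ref{willProof}, which states that Bob has a winning strategy for \(BG(W)\) if and only if \(W\) is countable. Next I would recall Theorem~\ref{stevenProof}, which states that Bob has a winning \emph{coding} strategy for \(BG(W)\) if and only if \(W\) is countable. Since both ``Bob has a winning strategy'' and ``Bob has a winning coding strategy'' have each been shown equivalent to the single condition that \(W\) is countable, all three statements are mutually equivalent by transitivity of logical equivalence, and the corollary follows.

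It is worth noting that the corollary is in fact over-determined by the available results. Theorems~\ref{willProof} and~\ref{stevenProof} alone pin both strategy notions to the countability of \(W\), and hence to each other; the immediately preceding theorem, which directly equates the existence of a winning strategy with the existence of a winning coding strategy, then serves as a redundant (though independently interesting) confirmation of the middle equivalence. For the cleanest write-up I would cite only Theorems~\ref{willProof} and~\ref{stevenProof}, since the direct strategy-to-coding-strategy equivalence is then a consequence rather than an input. The one point requiring care is purely a matter of bookkeeping: each cited result is stated for the Baker Game \(BG(W)\) specifically, so no translation between \(CG(W)\) and \(BG(W)\) is needed here, and the three conditions line up verbatim.

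Accordingly, I do not expect any substantive obstacle. The entire mathematical content has already been discharged by the theorems preceding the corollary, and what remains is a one-line deduction combining two biconditionals through their common right-hand side.
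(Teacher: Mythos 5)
Your proof is correct and matches the paper's intent: the corollary carries no proof in the paper precisely because it is an immediate combination of the preceding results, and chaining Theorem~\ref{willProof} and Theorem~\ref{stevenProof} through their common condition ``\(W\) is countable'' is a valid one-line derivation. One small structural remark: your framing treats the theorem equating winning strategies with winning coding strategies as redundant, but the paper's narrative runs the other way --- pairing Theorem~\ref{stevenProof} with that equivalence theorem yields the full corollary (including a re-proof of Theorem~\ref{willProof}) entirely within the coding-strategy framework, with no appeal to elementary submodels, which is exactly the point of Section 3.
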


\section{Conclusion}

In summary, the following is now known for ZFC.

\begin{itemize}
\item
    Bob has a winning strategy if and only if
    \(W\) is countable.
\item
    Alice has a winning strategy if and only if \(W\) contains a perfect set.
\item
    Using the Axiom of Choice, one may construct a 
    \term{Bernstein set} of real numbers
    that (in particular)
    is uncountable but fails to contain any perfect set. Thus
    Baker's Game is \term{indetermined} for the Bernstein set.
\end{itemize}

A sense of d\'ej\`a vu may come upon the reader already familiar
with the study of topological game theory. The \term{Banach-Mazur game}
was introduced for problem 43 in the Scottish Book \cite{zbMATH06482040},
used by mathematicians of the Lw\'ow School of Mathematics who frequented
the Scottish Caf\'e, located in the city known now as Lviv, Ukraine.

When played on the real numbers, this game starts with Bob, and proceeds
essentially like Baker's Game, except that they alternate choosing arbitrary
open subintervals of each other, rather than just moving left or right
endpoints of the open interval of legal numbers. As before, Alice
wins provided some member of a payoff set \(W\) remains legal (within
all chosen subintervals). The following facts for this game are
well-known.

\begin{itemize}
\item
    Bob has a winning strategy if and only if \(W\)'s
    intersection with some open set is \term{meager}.
    (Meager means "small" in a certain sense, including all countable sets,
    but meager sets can be uncountable: for example, a
    Cantor set.)
\item
    Alice has a winning strategy
    if and only if \(W\)'s complement is meager.
\item
    A Bernstein set's intersection with each open set is non-meager,
    but its complement
    is also non-meager.
    So the Banach-Mazur game is also indetermined for such a set.
\end{itemize}

So while these games are certainly not identical (when \(W\)
is the Cantor set, Bob wins the Banach-Mazur game while Alice
wins Baker's Game), there are definitely interesting parallels
for how these games can characterize large and small sets.
The Banach-Mazur game has enjoyed much attention over the past
century, and many questions are still being actively investigated
with regards to what kinds of winning strategies may exist, e.g.
\cite{zbMATH07370902}. These questions often consider the
Banach-Mazur game played upon a general topological space.

In that
spirit, we conclude with the following generalization of Baker's
Game that may be worth further study. Let \(W\) be a subset of
some partial (or perhaps linear) order \(P\). Then Baker's Game
is already well-defined for \(P\), except that it may be possible
at some finite stage that no number remains legal - in that case,
Alice loses immediately as no member of \(W\) will remain
legal at the end of the game anyway.

\begin{question}
Does there exist a partial
or linear order with an uncountable payoff set where Bob
has a winning strategy in this generalized Baker's Game?
\end{question}

\bibliographystyle{plain}
\bibliography{sample}

\end{document}